\newtheorem{theorem}{Theorem}[section]
\newtheorem{algorithm}{Algorithm}[section]
\newtheorem{definition}{Definition}[section]
\newtheorem{proposition}{Proposition}[section]
\newtheorem{corollary}{Corollary}[section]
\newtheorem{remark}{Remark}[section]
\newenvironment{manualdefinition}[1]{%
  \manualdefinitioninner
}{\endmanualdefinitioninner}
\newenvironment{manualalgorithm}[1]{%
  \manualalgorithminner
}{\endmanualtheoreminner}
\title{An Algorithm taking Kirby diagrams to Trisection diagrams}
\author{{\hspace{1mm}Willi Kepplinger} \\
	Department of Mathematics\\
	University of Vienna\\
	Oskar-Morgenstern-Platz 1, 1090 Vienna \\
	\texttt{willi.kepplinger@univie.ac.at} \\
}
\begin{document}
\maketitle
 \hypersetup{pageanchor=false}


\thispagestyle{empty}

 
 \thispagestyle{empty}

 
 \hypersetup{pageanchor=true}
 \renewcommand{\thepage}{ \arabic{page} }
 
 \setcounter{page}{1}
 \onehalfspacing

\begin{abstract}
We present an algorithm taking a Kirby diagram of a closed oriented $4$-manifold to a trisection diagram of the same manifold. This algorithm provides us with a large number of examples for trisection diagrams of closed oriented $4$-manifolds since many Kirby-diagrammatic descriptions of closed oriented $4$-manifolds are known. That being said, the algorithm does not necessarily provide particularly efficient trisection diagrams. We also extend this algorithm to work for the non-orientable case.
\end{abstract}

\section{Introduction}
\label{section:introduction}
Kirby diagrams are a classical way of diagrammatically describing closed  $4$-manifolds that has been hugely successful and that is well understood. Their usefulness lies in the fact that they fully describe a closed $4$-manifold using only a framed link and pairs of identified $B^3$'s in $\mathbb{R}^3$. A newer way of describing closed $4$-manifolds using only a genus $g$ surface and $3$ systems of $g$ simple closed curves on said surface, which has become known as trisection diagrams, was introduced by Robion Kirby and David Gay \cite{Gay2016}. This way of looking at $4$-manifolds has since had many applications such as a new way of studying embedded surfaces in $4$-manifolds \cite{Meier2018} or to re-prove the Thom conjecture using purely combinatorial means \cite{LambertCole2020}.\par

The goal of the present work is to answer the obvious question of how to pass between these two diagrammatic description of $4$-manifolds. While it is well known how to pass from a trisection diagram to a Kirby diagram, assuming that the trisection diagram is in a particularly nice form, the other direction has so far not been described in the literature. We thus present an algorithm that takes a Kirby diagram and transforms it into a trisection diagram. Since there is a wealth of examples of Kirby diagrams this provides us with a large number of examples of trisection diagrams though the algorithm may not always produce simple trisection diagrams.\par

Recently Maggie Miller and Patrick Naylor \cite{Miller2020} have worked out the theory of trisections and trisection diagrams for non-orientable $4$-manifolds and so we have included a modified version of the main result, an algorithm that takes a Kirby diagrammatic description of a non-orientable $4$-manifold and gives back a trisection diagram of this $4$-manifold.\par

We now introduce the basic language of trisections in the orientable case. Suitable modifications for the non-orientable case will be made in section \ref{sec:nonorientable}.\par

Given a closed oriented $4$-manifold $X$ we may choose a handle decomposition that is ordered by index and which has only a single $0$-handle and a single $4$-handle. We may therefore build our manifold by starting with this single $0$-handle, attaching the $1$-handles and the $2$-handles, after which we only need to attach a copy of $\natural_m\mathbb{S}^1\times B^3$, i.e. the union of the $m$ $3$-handles and the single $4$-handle. The Laudenbach-Poenaru Theorem \cite{Laudenbach1972} allows us to ignore the specific way in which this copy of $\natural_m\mathbb{S}^1\times B^3$ is attached as any diffeomorphism of $\partial\natural_m\mathbb{S}^1\times B^3=\sharp_m \mathbb{S}^1\times\mathbb{S}^2$ will extend over $\natural_m\mathbb{S}^1\times B^3$. Consequently just prescribing the way in which the $1$- and $2$-handles are attached is enough to specify the manifold provided that $\partial X^{(2)}=\partial \big(0-handle \cup 1-handles \cup 2-handles\big)=\sharp_m \mathbb{S}^1\times\mathbb{S}^2$ for some $m$. Thus a closed oriented $4$-manifold is completely specified by the attaching regions of the $1$-handles and the framed attaching link of the $2$-handles. Projecting these attaching regions living in $\mathbb{R}^3\subset\mathbb{S}^3=\partial B^4$ to an appropriate plane so that the projections of attaching regions of $1$-handles are disjoint, the projections of attaching regions of $1$- and $2$-handles only intersect if the attaching regions in $R^3$ did, and so that the projection of the attaching links of the $2$-handles contains double points at worst, gives a Kirby diagram of the manifold.\par

Since trisection diagrams are less well known we will take more care in introducing them. We begin by defining Trisections.
\begin{definition}{[Trisections]}
\label{def:trisections}
Let $X$ be a closed oriented $4$-manifold. A $(g;\,k_1,k_2,k_3)$-trisection of $X$ is a decomposition 
\begin{align*}
X=X_1\cup X_2\cup X_3
\end{align*} 
where $X_i\cong \natural_{k_i}\mathbb{S}^1\times B^3$, $H_{i,i+1}= X_i\cap X_{i+1}\cong \natural_{g}\mathbb{S}^1\times B^2$, and $\Sigma = X_1\cap X_2\cap X_3\cong\sharp_{g}\mathbb{S}^1\times \mathbb{S}^1$. We call the tuple $(X,X_1,X_2,X_3)$ a trisected manifold.
\end{definition}

Trisections for which $k_1=k_2=k_3$ are called balanced trisections. There is a set of three operations, called $i$-stabilizations where $i$ takes values in $\{1,2,3\}$, that transform a $(g;\,k_1,k_2,k_3)$-trisection of $X$ into a $(g+1;\,k^{\prime}_1,k^{\prime}_2,k^{\prime}_3)$-trisection of $X$, with $k^\prime_i=k_i+1$ and $k^\prime_j=k_j$ for $j\neq i$. This means that an unbalanced trisection may always be stabilized to a balanced one.\par
One of the main reasons why trisections are a nice structure is because they induce diagrammatic descriptions of $4$-manifolds called trisection diagrams that are similar to Heegaard diagrams in dimension $3$. To properly define trisection diagrams we need the following notion.

\begin{figure}[!htb]
	\centering
	\includegraphics[width=0.6\linewidth]{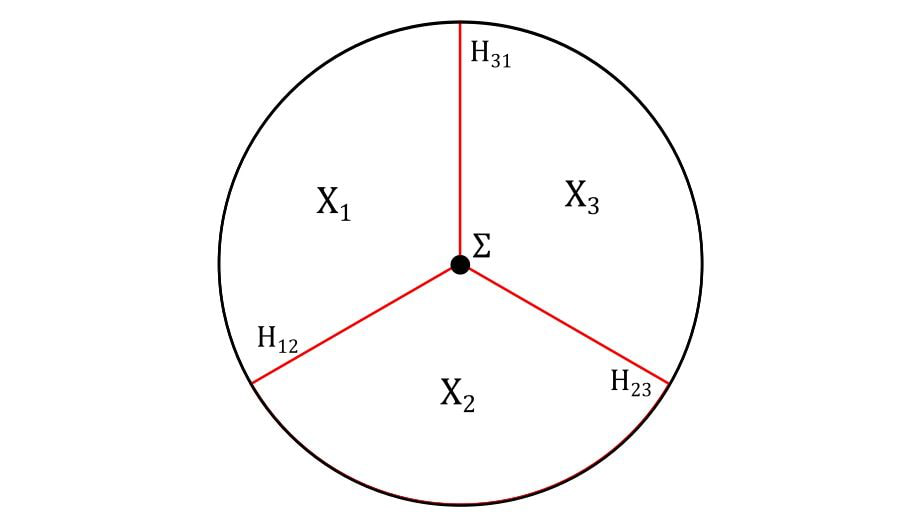}
	\caption{Cartoon of a trisection}
	\label{intro_trisection}
\end{figure}

\begin{definition}
Let $(\Sigma_g,\alpha,\beta)$ and $(\Sigma^\prime_g,\alpha^\prime,\beta^\prime)$ be two surfaces with two systems of simple closed curves each. We say that $(\Sigma_g,\alpha,\beta)$ is slide diffeomorphic to $(\Sigma^\prime_g,\alpha^\prime,\beta^\prime)$ if, after a diffeomorphism taking $\Sigma_g$ to $\Sigma^\prime_g$, $\alpha$ is slide equivalent to $\alpha^\prime$, and $\beta$ to $\beta^\prime$. By $\alpha$ being slide equivalent to $\alpha^\prime$ we mean that there exists a sequence of isotopies and $2$-dimensional handle slides that takes $\alpha$ to $\alpha^\prime$.
\end{definition}

With this notion in mind we can define trisection diagrams.

\begin{definition}{[trisection diagrams]}\label{def:trisectiondiagram}
A $(g;\,k_1,\,k_2,\,k_3)$-trisection diagram $(\Sigma_g,\alpha,\beta,\gamma)$ is a genus $g$ surfaces $\Sigma_g$ together with three sets of $g$ curves $\alpha=(\alpha_1,\dots,\alpha_g)$, $\beta=(\beta_1,\dots,\beta_g)$, and $\gamma=(\gamma_1,\dots,\gamma_g)$ satisfying that 
\begin{itemize}
	\item $(\Sigma_g,\,\alpha,\,\beta)$ is slide diffeomorphic to the $g-k_1$ times stabilized standard Heegaard diagram\\ of $\sharp_{k_1}\mathbb{S}^1\times\mathbb{S}^2$,
	see Figure \ref{fig:trisection_diagram_X_1},
	\item $(\Sigma_g,\,\beta,\,\gamma)$ is slide diffeomorphic to the $g-k_2$ times stabilized standard Heegaard diagram\\ of $\sharp_{k_2}\mathbb{S}^1\times\mathbb{S}^2$,
	\item 
	$(\Sigma_g,\,\gamma,\,\alpha)$ is slide diffeomorphic to the $g-k_3$ times stabilized standard Heegaard diagram\\ of $\sharp_{k_3}\mathbb{S}^1\times\mathbb{S}^2$. 
\end{itemize} 

We say that a trisection diagram is in $X_1$-standard form if $(\Sigma_g,\,\alpha,\,\beta)$ is as depicted in Figure \ref{fig:trisection_diagram_X_1}, so the standard $g-k_1$-times stabilized Heegaard diagram of $\sharp_{k_1}S^1\times S^2$.
$X_2$- and $X_3$-standard form are defined similarly only with $(k_2,\,\beta,\,\gamma)$ and $(k_3,\,\gamma,\,\alpha)$ instead of $(k_1,\,\alpha,\,\beta)$, respectively. 
\end{definition}

\begin{figure}[!htb]
	\centering
	\includegraphics[width=0.8\linewidth]{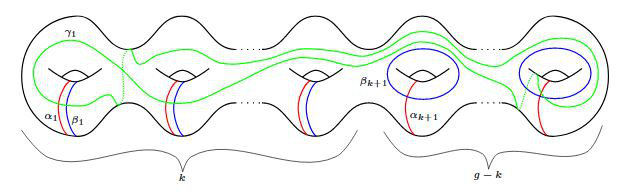}
	\caption{Trisection diagram in $X_1$-standard form. We use the convention of colouring the $\alpha$-curves in red, the $\beta$-curves in blue, and the $\gamma$-curves in green. Note that only one of the $g$ green $\gamma$ curves is drawn.}
	\label{fig:trisection_diagram_X_1}
\end{figure}

Given a trisection of a closed oriented $4$-manifold $X$ we obtain a trisection diagram of $X$ by picking a Morse function that has only a single index $3$ critical point, some index $2$ critical points and no critical points of index $1$ or $0$  on each of the $3$-handlebodies $H_{i,i+1}$. With some choice of pseudo-gradient for these Morse functions we may intersect the resulting descending manifolds of the index $2$ critical points with the central surface $\Sigma$. This gives a system of curves $\alpha,\beta,\gamma$, resulting from $H_{12}, H_{31}, H_{23}$ respectively, so that each pair of these systems of curves is a Heegaard diagram for one of the $\partial X_i\cong \sharp_{k_i}\mathbb{S}^1\times \mathbb{S}^2$. Applying Waldhausens Theorem \cite{Waldhausen1968}, that is the fact that $\partial X_i\cong \sharp_{k_i}\mathbb{S}^1\times \mathbb{S}^2$ only has a single genus $g$ Heegaard diagram up to isotopy and handle slides, we conclude that this process does indeed yield a trisection diagram.\par
Trisection diagrams really only become interesting objects because they turn out to fully capture the information of closed oriented $4$-manifolds:
Let $H_\alpha, H_\beta, H_\gamma$ be the $3$-dimensional handlebodies specified by the $\alpha, \beta, \gamma$-curves respectively. Then, starting from a trisection diagram, we thicken up the surface $\Sigma_g$ to $\Sigma_g\times B^2$ and attach $H_\alpha\times I$ to $\Sigma_g\times I\subset\partial \Sigma_g\times B^2$ along $H_\alpha\times I$, and then do the same for $H_\beta\times I$ and $H_\gamma\times I$ only for different segments on $\partial B^2$. The resulting object is (up to smoothing of corners) a $4$-dimensional manifold with three boundary components, each diffeomorphic to $\sharp_{k_i}\mathbb{S}^1\times \mathbb{S}^2$. The Laudenbach-Poenaru Theorem \cite{Laudenbach1972} now guarantees that up to diffeomorphism there is only a single closed $4$-manifold that is obtained by gluing $\natural_{k_i}\mathbb{S}^1\times B^3$ to the corresponding boundary components, so we obtain a uniquely defined $4$-manifold.\par

The main result the author wishes to present is an explicit algorithm that takes in a Kirby diagram describing a closed oriented $4$-manifold $X$ and produces a trisection diagram describing $X$.

\begin{algorithm}\label{alg:main}
Given a Kirby diagram describing a closed oriented $4$-manifold $X$, which comprises $k$ $1$-handles and a framed link $L$ consisting of $l$ knots $K_1,\dots,K_l$, one obtains a trisection diagram describing $X$ by applying the following algorithm (many of whose steps are illustrated in Figure \ref{intro_steps}):
\begin{enumerate}
    \item Replace the projection of the attaching regions of the $4$-dimensional $1$-handles by attaching regions of $3$-dimensional $1$-handles (a pair of $2$ disks) with a parallel $\alpha-\beta$ curve pair drawn around one of the disks.  
    \item For every knot $K$ in the link $L$ that does not have an overcrossing, introduce a $+1$- then a $-1$-kink. In other words we do a Reidemeister-$2$ move between two strands of the same knot $K$.
    \item Introduce a number of $\pm 1$ kinks equal to the sign and numerical value of the blackboard framing.
    \item Replace all intersections in the fashion shown in Figure \ref{intro_steps}. Note that this increases the number of both $\alpha$- and $\beta$-curves from $l$ to some $g\in \mathbb{N}$.
    \item Do handle slides among the $\alpha$-curves such that $\lvert \alpha_i \cap K_j \rvert = \delta_{ij}$ for $1\leq i \leq g$ and $1\leq j \leq l$.
    \item Rename $K_j=\gamma_j$ for $1\leq j \leq l$ (and color them green).
    \item Draw a green curve parallel to every unpaired $\alpha$-curve, i.e. all the $\alpha$-curves that do not intersect any of the $K_j$ (so precisely $\alpha_{l+1},\dots,\alpha_g$), and call these new green curves $\gamma_{l+1},\dots,\gamma_g$.
\end{enumerate}
\end{algorithm}

\section*{Acknowledgements}
I would like to warmly thank my advisor Vera V\'ertesi for suggesting this nice topic and for many interesting and helpful conversions during the course of this project.
I am also deeply grateful to Michael Eichmair for his constant encouragement and helpful mentoring.
Finally I want to express my gratitude toward David Gay who not only introduced trisections but also was kind enough to look over the completed algorithm and give me extremely generous feedback.\par
Furthermore I want to thank the Vienna School of Mathematics (VSM) for providing a stable and pleasant environment in which to do research.\par
This research was supported by the Austrian Science fund FWF via the START Price project Y-963.

\section{Preliminaries}
The purpose of this section is to recall the existence proof of trisections using handle decompositions as presented in \cite{Gay2016}, with a focus on the construction of $X_2$ as this will be crucial in proving Algorithm \ref{alg:main}.\par
Let $X$ be a closed oriented $4$-manifold with a handle decomposition using one $0$-handle, $k$ $1$-handles, $l$ $2$-handles, $m$ $3$-handles, and one $4$-handle. One can easily see two pieces of the trisection by grouping together the $0$-handle with the $1$-handles and the $4$-handle with the $3$-handles into a $4$-dimensional handlebody each. The difficulty, then, lies in showing how one can use everything that is left to construct another $4$-dimensional handlebody and that all these handlebodies intersect so as to define an unbalanced trisection. We can then stabilize the unbalanced trisection to pass to a balanced one if we wish to. 
Collecting the $0$-handle and the $1$-handles into $X_1$ we consider the unique (that is unique up to isotopy) genus $k$ Heegaard-splitting $\partial X_1=H_{12}\cup_{\Sigma} H_{31}$ with Heegaard-surface $\Sigma\cong\sharp_k\mathbb{S}^1\times\mathbb{S}^1$. We now manipulate the framed attaching link $L$ of the $4$-dimensional $2$-handles and the Heegaard surface $\Sigma$ in several steps:
\begin{enumerate}
    \item We flow $L$ onto $\Sigma$, for example using the flow of a Morse function inducing the Heegaard splitting associated to $\Sigma$, which we can do since $L$ only needs to be disjoint from the descending manifolds of the $3$-dimensional $1$-handles and the ascending manifolds of the $3$-dimensional $2$-handles. 
    \item This flow can be chosen such that the image of $L$ on the surface $\Sigma$ (which we also call $L$) has at worst transverse double point intersections with itself. If a knot in $L$ is intersection free we introduce a kink (i.e. we do a Reidemeister $1$-move on it), achieving that all knots $K_i$ in $L$ have at least one intersection with either itself or another knot $K_j$ in $L$.
    \item As there are only finitely many self-intersection points of $L$ on $\Sigma$ we can stabilize $\Sigma$ near the intersection points in order to resolve them. More precisely we modify the Heegaard surface $\Sigma$ and the corresponding flow slightly using stabilizations so that the flow taking $L$ to the stabilized version of $\Sigma$, which we will continue calling $\Sigma$, is an isotopy and therefore does not introduce any intersections. Note that the genus of the handlebodies $H_{12}$ and $H_{31}$ also increases as we stabilize $\Sigma$. For an illustration of this process see Figure \ref{resolv_intersec}.
    \item We introduce kinks and resolve them with stabilizations until the framing of $L$ coincides with the surface framing. In other words this step achieves that the link $L$ is zero-framed with respect to the Heegaard-surface $\Sigma$. The new genus of $\Sigma$ is denoted with $g$, meaning we have stabilized a total of $g-k$ times. 
    \item We consider a complete system of compression disks $D_j$ for the handlebody $H_{12}$ and call $H_{12}\cap D_j=\alpha_j$. We know that every knot $K_i$ intersects one of the $\alpha_j$: recall that every knot $K_i$ runs over a copy of $S^1\times S^1$ that no other knot runs over since at least one copy was created specifically to resolve an intersection of $K_i$. Furthermore we can use handle slides among the compression disks to ensure that every $K_i$ intersects only one of the $\alpha_j$. Notice that since this process associates to every $K_i$, of which there are $l$ many, exactly one $\alpha_j$, of which there are $g$ many, there are $g-l$ leftover $\alpha$-curves that do not intersect any of the $K_i$.
\end{enumerate}

\begin{figure}[!htb]
	\centering
	\includegraphics[width=0.6\linewidth]{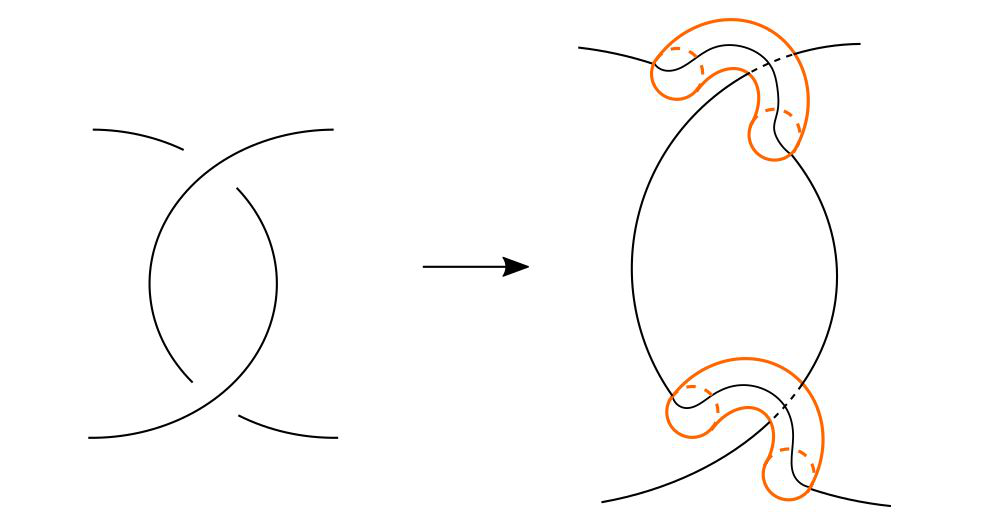}
	\caption{Resolving intersections}
	\label{resolv_intersec}
\end{figure}

We are now ready to construct a trisection of $X$:\par
We set $X_1=\text{0-handle}\cup\text{1-handles}$, 
$X_2=\mathcal{N}(H_{12}) \cup_L \big(\text{4-dimensional 2-handles}\big)$, meaning that we take a closed collar neighbourhood $\mathcal{N}(H_{12})\cong H_{12}\times I$ of $H_{12}$ inside $X$ such that $H_{12}\times \{0\}\subset \partial X_1$ and attach the $4$-dimensional $2$-handles of the handle decomposition of $X$ along $L\times \{1\}$. Finally we define $X_3=X\setminus (X_1\cup X_2)$. Evidently $X_1$ and $X_3$ are handlebodies, it therefore remains to show that $X_2$ is a $4$-dimensional handlebody and that all intersections of the $X_i$ are as advertised.

\begin{figure}[!htb]
	\centering
	\includegraphics[width=0.3\linewidth]{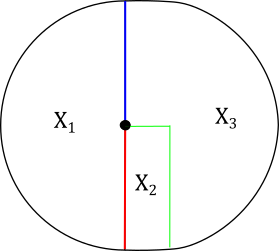}
	\caption{Schematic of the trisection}
	\label{sketch_trisection}
\end{figure}

\begin{theorem}{[Existence of trisections for closed oriented $4$-manifolds]}
\label{thm:existence}
$X=X_1\,\cup X_2\,\cup X_3$ is a trisection.
\end{theorem}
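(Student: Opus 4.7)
The plan is to check the three defining properties of a trisection (each $X_i$ a $4$-dimensional handlebody of genus $k_i$, each pairwise intersection $H_{ij}$ a $3$-dimensional genus-$g$ handlebody, and $\Sigma$ a closed surface of genus $g$) against the construction just described. The piece $X_1$ is immediate from its definition as the $0$-handle together with the $k$ $1$-handles, so $X_1 \cong \natural_k\mathbb{S}^1 \times B^3$ and $k_1 = k$.

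The main new content is verifying that $X_2$ is a $4$-dimensional handlebody. I would first identify $\mathcal{N}(H_{12}) \cong H_{12} \times I \cong \natural_g\mathbb{S}^1 \times B^3$ with its natural product handle structure: one $0$-handle and $g$ $1$-handles indexed by the compression disks $D_j$ of $H_{12}$, with the belt $2$-sphere of the $j$-th $1$-handle meeting $\Sigma \times \{1\}$ exactly in $\alpha_j$. The careful preparation in steps 1--5 then pays off: each $K_i$ lies on $\Sigma \times \{1\}$ and, by the handle slides of step 5, meets exactly one $\alpha_j$ transversely in a single point. Hence each $4$-dimensional $2$-handle attached along $K_i$ meets the belt sphere of exactly one $1$-handle of $\mathcal{N}(H_{12})$ transversely once, and the standard handle cancellation lemma eliminates these $l$ canceling $(1,2)$-pairs; this gives $X_2 \cong \natural_{g-l}\mathbb{S}^1 \times B^3$, so I set $k_2 := g-l$.

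For $X_3$, I would isotope the collar $\mathcal{N}(H_{12})$ back into $X_1$, so that $X_1 \cup X_2$ is diffeomorphic to the sub-handlebody built from the $0$-, $1$-, and $2$-handles of $X$. Its boundary is therefore the intermediate level set sitting between the $2$- and $3$-handles, which must be $\sharp_m\mathbb{S}^1 \times \mathbb{S}^2$ since $X$ is closed with $m$ $3$-handles and one $4$-handle. So $X_3$ is precisely the $3$-handle plus $4$-handle piece, and the Laudenbach-Poenaru Theorem identifies it with $\natural_m\mathbb{S}^1 \times B^3$, giving $k_3 := m$. The pairwise intersections $X_1 \cap X_2 = H_{12}$ and $X_1 \cap X_3 = H_{31}$ are the two sides of the Heegaard splitting of $\partial X_1$ by construction, the triple intersection is $\partial H_{12} = \Sigma$, and it remains only to show that $H_{23} := X_2 \cap X_3$ is a genus-$g$ handlebody. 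Since $\partial X_2 = H_{12} \cup_\Sigma H_{23}$ is a genus-$g$ splitting of $\partial X_2 \cong \sharp_{k_2}\mathbb{S}^1 \times \mathbb{S}^2$, Waldhausen's Theorem forces it to be the $(g-k_2)$-fold stabilization of the standard splitting, so $H_{23}$ is indeed a genus-$g$ handlebody.

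The principal obstacle is the handle cancellation in the $X_2$ step: the entire preparatory sequence (flowing $L$ onto $\Sigma$, stabilizing $\Sigma$ to resolve intersections, correcting the framing, and sliding compression disks to isolate intersections with the $\alpha_j$) is engineered precisely to produce the geometric intersection pattern needed for the $(1,2)$-cancellations to fire. Checking that each preparatory step preserves rather than destroys this structure, and in particular that the stabilizations of $\Sigma$ introduce exactly the $1$-handles that are later killed by the cancellations, is where the real care lies.
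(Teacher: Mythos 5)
Your verification that $X_2$ is a $4$-dimensional handlebody of genus $g-l$ via cancellation of $(1,2)$-handle pairs, and your identification of $X_1$, $X_3$, $X_1\cap X_2$, $X_1\cap X_3$ and the triple intersection, all track the paper's argument. The problem is the final step, where you claim $H_{23}=X_2\cap X_3$ is a genus-$g$ handlebody. You write that $\partial X_2=H_{12}\cup_\Sigma H_{23}$ is a ``genus-$g$ splitting'' of $\sharp_{k_2}\mathbb{S}^1\times\mathbb{S}^2$ and invoke Waldhausen's theorem to conclude $H_{23}$ is a handlebody. This is circular: Waldhausen's theorem classifies \emph{Heegaard} splittings, and asserting that $H_{12}\cup_\Sigma H_{23}$ is a Heegaard splitting already presupposes that $H_{23}$ is a handlebody, which is precisely what is to be shown. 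Nor does one side being a handlebody force the other side to be one: the boundary of a tubular neighbourhood of a nontrivially knotted circle inside a ball in $\sharp_k\mathbb{S}^1\times\mathbb{S}^2$ is a torus bounding a solid torus on one side and a non-handlebody (a knot complement summand, with non-free fundamental group) on the other.

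The paper closes exactly this gap by an explicit description rather than an appeal to uniqueness of splittings: using that the framing of $L$ agrees with the surface framing, one writes $X_2=\big(H_{12}\cup_L C_L^\ast\big)\times I$, where $C_L^\ast$ is the dual of the compression body determined by $L$, and then reads off $X_2\cap X_3=C_L^\ast\cup\big(H_{12}\cup_L C_L^\ast\big)^\ast$ --- that is, $\Sigma\times I$ with $2$-handles attached along $L$ on one side and along push-offs of the surviving $\alpha$-curves on the other, capped with a $3$-handle, which is visibly a handlebody turned upside down. This explicit decomposition is not optional bookkeeping: it is the source of the $\gamma$-curves in Proposition \ref{prop:3d_diagram} and Algorithm \ref{alg:main}, so you need it in this concrete form, not merely the abstract fact that some handlebody structure exists.
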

\begin{proof}
A crucial observation is that since the framing of $L$ equals the surface framing we can think
of \\ $H_{12}\times I \cup_{L}\big(\text{4-dimensional 2-handles}$ with framing induced by $\Sigma\big)$ as 
$\big(H_{12} \cup_{L}\text{3-dimensional 2-handles}\big)\times I$.

Having arranged $L$ and the compression disks $D_i$ as we did, attaching $3$-dimensional $2$-handles along $L\subset \Sigma = \partial H_{12}$ leads to these $l$ $2$-handles canceling with $l$ $3$-dimensional $1$-handles of $H_{12}$. This is because for each $K_i\in L$, the attaching sphere of the corresponding $3$-dimensional $2$-handle, intersects the belt sphere of precisely one $3$-dimensional $1$-handle transversely and exactly once, so $l$-many of the $g$ $1$-handles are cancelled. Crossing the three dimensional handlebody $H_{12}$ with $I$ we get $H_{12}\times I\cong \natural_g\mathbb{S}^1\times B^3$ and see that the co-cores $D_j$ of the $3$-dimensional $1$-handles become a compression balls $B_j=D_J\times I$ of $4$-dimensional $1$-handles. The boundary $\partial B_j\cong\mathbb{S}^2$ correspond to the belt spheres of these $4$-dimensional $1$-handles. We interpret $L$ as the attaching link of $4$-dimensional $2$-handles (with the framing induced by the surface $\Sigma_{g}$), push all its constituent knots $K_i$ into $H_{12}\times \{1\}$ so that each of them intersects a compression disk $D_i\times \{1\}$ transversely and exactly once. Since this compression disk is part of the boundary of the compression ball $B_i$ and is the only place in which the attaching sphere $K_i$ intersects said belt sphere we again end up with a cancellation. Thus $X_2$ is a $4$-dimensional handlebody of genus $g^\prime=g-l$.\par

Next, a quick look at Figure \ref{sketch_trisection} tells us that 
\begin{itemize}
	\item $X_1\cap X_2=H_{12}\cong\natural_{g}\mathbb{S}^1\times B^{2}$
	\item $X_3\cap X_1=H_{31}\cong\natural_{g}\mathbb{S}^1\times B^{2}$
	\item $X_1\cap X_2\cap X_3\cong \Sigma_g$
\end{itemize} 
and so the only intersection left to check is $X_2\cap X_3=\partial \overline{X_2\setminus H_{12}}$. Denote by $C_L$ the $3$-dimensional compression body defined by $L$, obtained by attaching $l$ $1$-handles to $\Sigma_{g}$ so that their belt spheres coincide with $L$ on $\Sigma_g$, and denote by $C_L^\ast$ its dual compression body, i.e. the upside down version of $C_L$. Going forward we will also denote the dual of a handlebody $H$ by $H^\ast$. Note that using this notation $H_{12}\cup_L C_L^\ast$ is the handlebody obtained by attaching $3$-dimensional $2$-handles to $H_{12}$ along $L$ and that we may write $X_2=\big(H_{12}\cup_L C_L^\ast\big)\times I$ with boundary as depicted in Figure \ref{X_2_int_X_3}. \par

\begin{figure}[!htb]
	\centering
	\includegraphics[width=0.6\linewidth]{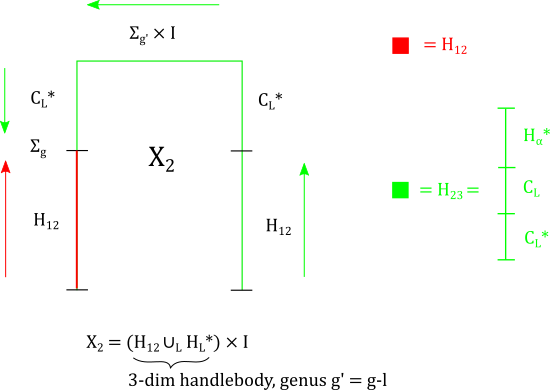}
	\caption{Finding $X_2\cap X_3=H_\gamma$. By $\Sigma_{g^\prime}$ we mean $\partial\big(H_{12}\cup_L C_L^\ast\big)$.}
	\label{X_2_int_X_3}
\end{figure}

Since $X_2$ is glued to $X_1$ along $H_{12}$, $X_2\cap X_3=C_L^\ast \cup \big(H_{12}\cup_{L} C_L^\ast\big)^\ast$, which is just a $3$-handle with a bunch of $2$-handles, so a handlebody.
\end{proof}

The significant portion of this proof for the purposes of the present discussion is the way in which $X_2$ and in particular its boundary is built. The algorithm will be constructed by analyzing $H_{23}=\overline{\partial X_2\setminus H_{12}}$ and the attaching regions of the corresponding $3$-dimensional $2$-handles.

\section{Main Result}

Our aim in this section is to present Algorithm $\ref{alg:main}$ which constructs from a Kirby diagram describing $X$ a trisection diagram describing $X$ and to indicate how to go in the other direction as well. All Kirby diagrams in this section will be framed using the blackboard framing.\par

We will start with the easier direction: going from a trisection diagram to a Kirby diagram is not difficult assuming the trisection diagram is in $X_i$-standard position (wlog we will assume it is in $X_1$-standard position).
Given a trisection diagram $(\Sigma_g,\alpha,\beta, \gamma)$ of a closed oriented $4$-manifold $X$ in $X_1$-standard form (i.e. we look at the standard Heegaard diagram for $\alpha$ and $\beta$ as in Figure \ref{fig:trisection_diagram_X_1}) we have the $g$-many $\gamma$-curves that will in general look very complicated. Together with the framing induced by the surface, $\gamma=(\gamma_1,\dots,\gamma_g)$ is a framed link. We will now see that, given a trisection diagram of $X$ in $X_1$-standard position one can already see a Kirby diagram for $X$. The following Algorithm \ref{alg:tris_to_kirby} was already described in \cite{Gay2016}.

\begin{algorithm}
\label{alg:tris_to_kirby}
Let $(\Sigma_g,\alpha,\beta,\gamma)$ be a trisection diagram of a closed oriented $4$-manifold $X$ in $X_1$-standard form. Then one obtains a Kirby diagram describing $X$ by applying the following algorithm
\begin{enumerate}
    \item Replace every parallel $\alpha-\beta$ pair with a dotted circle.
    \item Delete all other $\alpha-\beta$ pairs.
    \item Regard all $\gamma$-curves as a framed link, where the framing is induced by the surface $\Sigma$ (convert this to blackboard framing if needed).
    \item Delete the surface $\Sigma_g$.
\end{enumerate}
\end{algorithm}

\begin{proof}
The $\alpha$ and $\beta$ curves allow us to build $\natural_{k_1} \mathbb{S}^1\times \mathbb{S}^2$ and we may then uniquely glue to this $\sharp_{k_1} \mathbb{S}^1\times B^3$ which we then call $X_1$. The $4$-dimensional handlebody $X_1$ can be interpreted as a $4$-dimensional $0$-handle with $k_i$ $4$-dimensional $1$-handles attached to it. Now we glue $4$-dimensional $2$-handles to $X_1$ along $\gamma$ using the induced surface framing. This is the same as attaching $3$-dimensional $2$-handles and crossing the result with $I$. Since the union of $\Sigma$ and the $3$-dimensional $2$-handles is $\overline{H_{23}\setminus B^3}$, the union of $\Sigma\times I$ and the $4$-dimensional $2$-handles is $\big(H_{23}\setminus B^3\big)\times I$. See the result of this construction in Figure \ref{tris-kirby}. So far we have used only the $0$-, the $1$-, and the $2$-handles so the complement is the union of the $3$-handles and a $4$-handle, so a $4$-dimensional handlebody. The structure which we have built up so far exactly coincides with the data provided by the Kirby diagram defined by the process outlined above, so this Kirby diagram describes the closed oriented $4$-manifold $X$.

\begin{figure}[!htb]
	\centering
	\includegraphics[width=0.5\textwidth]{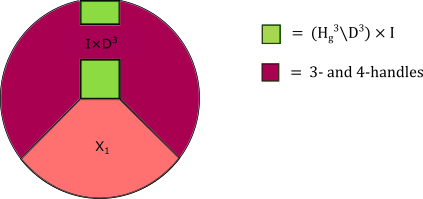}
	\caption{$X_1$ with attached $4$-dimensional $2$-handles}
	\label{tris-kirby}
\end{figure}
\end{proof}

\begin{figure}[!htb]
	\centering
	\includegraphics[width=0.4\linewidth]{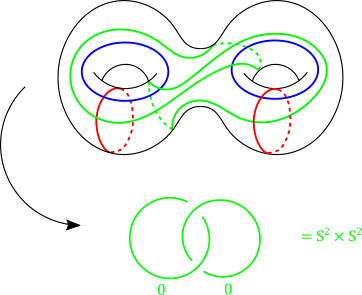}
	\caption{An example of going from a trisection diagram in $X_1$-standard form to a Kirby diagram}
	\label{tris_diagram-kirby}
\end{figure}

\begin{remark}
Algorithm \ref{alg:tris_to_kirby} tells us that we can sometimes guess the trisection diagram of a manifold, provided we know a Kirby diagram describing the manifold. We just need to find a trisection diagram in $X_1$-standard position such that, after applying Algorithm \ref{alg:tris_to_kirby}, we end up with the desired Kirby diagram. The difficulty in guessing the correct trisection diagram is checking that the surface plus collection of curves one arrives at through guesswork really is a trisection diagram. Furthermore one can use Algorithm \ref{alg:tris_to_kirby} to prove that the only closed oriented $4$-manifolds which admit trisections of genus $0$ or $1$ are $\mathbb{S}^4,\mathbb{S}^1\times\mathbb{S}^3$, $\mathbb{C}P^2$, and $\overline{\mathbb{C}P^2}$ by an easy distinction of cases.
\end{remark}

We now want to build up to Algorithm \ref{alg:main}. The next proposition tells us how to construct a trisection diagram from a Kirby diagram, but it is not yet fully algorithmic because it relies on a choice of flow that moves the framed attaching link of the $2$-handles onto a Heegaard surface of $X_1$.\par
Recall that in the proof of the existence of trisections we just picked some Heegaard surface onto which we flowed the attaching link. The idea is that we construct one concretely and then work from there. 

\begin{proposition}
\label{prop:3d_diagram}
Given a Kirby diagram describing a closed oriented $4$-manifold $X$, which comprises $k$ $1$-handles and a framed link $L$ consisting of $l$ knots $K_1,\dots,K_l$ one obtains a trisection diagram $(\Sigma_g,\alpha,\beta,\gamma)$ describing $X$ by applying the following algorithm
\begin{enumerate}
    \item Construct meridians for all $1$-handles and connect them so that they form a bouquet of $k$ circles.
    \item Thicken up this bouquet of $k$ circles to $3$-dimensional genus $k$-handlebody until the dotted circles lie on $\Sigma$, the boundary of this handlebody. Note that the surface $\Sigma$ is a genus $k$ Heegaard-surface for $\partial \big(0-handle \cup 1-handles\big)$ and that the dotted circles form a cutting system for it.
    \item Like in the proof of Theorem \ref{thm:existence}, stabilize $\Sigma$ suitably often so that $L$ can be flowed onto the Heegaard surface without self-intersections and so that the framing agrees with the surface framing. If any of the $K_i$ does not intersect itself or another $K_j$, artificially introduce a self intersection using a Reidemeister I move and resolve it with a stabilization. Let $g$ denote the genus of the stabilized version of the surface $\Sigma$ which we will still denote by $\Sigma$.
    \item Replace all dotted circles with a parallel $\alpha-\beta$ pair and draw a canceling $\alpha-\beta$ pair at the locus of any stabilization.
    \item Rename $K_1,\dots,K_l$ to $\gamma_1,\dots,\gamma_l$. Do handleslides among the $\alpha$-curves and afterward rename them if necessary so that $\lvert\gamma_i\cap\alpha_j\rvert=\delta_{ij}$ for all $1\leq i\leq l,1\leq j\leq g$.
    \item Draw parallel pushoffs for all $\alpha_j$ with $j>l$ and call them $\gamma_j$.
\end{enumerate}
\end{proposition}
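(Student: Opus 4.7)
The plan is to exhibit the algorithm as a concrete instantiation of the existence construction from the proof of Theorem \ref{thm:existence}, so that the output is automatically a trisection diagram by that theorem. The existence argument consumes three pieces of choice data: a genus-$k$ Heegaard surface $\Sigma$ of $\partial X_1$, a flow taking $L$ onto $\Sigma$, and a system of compression disks for $H_{12}$ arranged so that each $K_i$ meets exactly one of the resulting $\alpha$-curves. Steps 1--2, 3, and 5 of the algorithm make precisely these three choices explicit from the Kirby-diagrammatic data; the remaining steps just read off the three systems of curves.

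First I would verify steps 1 and 2. A bouquet of $k$ meridians, one around each dotted circle, thickens to a genus-$k$ $3$-dimensional handlebody $H$ embedded in the ambient $S^3$; by construction the dotted circles lie on $\partial H = \Sigma$ and a cutting system of meridian disks for $H$ is manifest. The closure of $S^3\setminus H$ is the handlebody obtained from the $0$-handle after removing the attaching regions of the $1$-handles, so $\Sigma$ is a genus-$k$ Heegaard surface for $\partial X_1\cong\sharp_{k}\mathbb{S}^1\times\mathbb{S}^2$ with $H=H_{12}$. Step 3 is then literally the stabilization-and-isotopy procedure from the proof of Theorem \ref{thm:existence}, producing a genus-$g$ surface (still called $\Sigma$) on which $L$ lies embedded and whose surface framing agrees with the Kirby framing.

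Step 4 records the Heegaard diagram of $\partial X_1$ in $X_1$-standard form on this stabilized $\Sigma$: each dotted circle becomes a parallel $\alpha$-$\beta$ pair, both curves of which bound compression disks after a small pushoff, and each stabilization contributes a canonical cancelling $\alpha$-$\beta$ pair meeting geometrically once. Slide-equivalence to the $(g-k)$-times stabilized standard Heegaard diagram of $\sharp_{k}\mathbb{S}^1\times\mathbb{S}^2$ is immediate, so $(\Sigma_g,\alpha,\beta)$ has the required form.

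The main obstacle, and the heart of the argument, is verifying that the $\gamma$-curves produced by steps 5 and 6 form a cutting system for $H_{23}$. Using the description $H_{23}=C_L^\ast\cup(H_{12}\cup_L C_L^\ast)^\ast$ from the proof of Theorem \ref{thm:existence}: the $l$ co-cores of $C_L^\ast$ give compression disks bounded by pushoffs of the $K_i$, which step 5 relabels $\gamma_1,\dots,\gamma_l$. After the handle slides enforcing $|\gamma_i\cap\alpha_j|=\delta_{ij}$, the link $L$ cancels $l$ of the $g$ $1$-handles of $H_{12}$ inside $H_{12}\cup_L C_L^\ast$, so the dual handlebody $(H_{12}\cup_L C_L^\ast)^\ast$ contributes exactly $g-l$ further compression disks for $H_{23}$, whose boundary curves are pushoffs of the unpaired $\alpha_{l+1},\dots,\alpha_g$; these are the $\gamma_{l+1},\dots,\gamma_g$ produced in step 6. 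Together the $\gamma_i$ are $g$ disjoint simple closed curves on $\Sigma_g$ bounding a complete collection of compression disks for $H_{23}$, so $(\Sigma_g,\beta,\gamma)$ and $(\Sigma_g,\gamma,\alpha)$ give Heegaard diagrams of $\partial X_2$ and $\partial X_3$ respectively, and $(\Sigma_g,\alpha,\beta,\gamma)$ is a trisection diagram of $X$.
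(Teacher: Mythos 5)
Your proposal is correct and follows essentially the same route as the paper: identify the thickened meridian bouquet as $H_{12}$ with the dotted circles as its cutting system, invoke the stabilization procedure of Theorem \ref{thm:existence} to place $L$ on $\Sigma$ with matching framing, and read off the $\gamma$-curves from the decomposition $H_{23}=C_L^\ast\cup\big(H_{12}\cup_L C_L^\ast\big)^\ast$ together with the doubling of $H_{12}\cup_L C_L^\ast$, which forces the last $g-l$ of them to be pushoffs of the unpaired $\alpha$-curves. The only real divergence is in verifying that $\Sigma$ is a Heegaard surface of $\partial X_1$: the paper reduces to a single $\mathbb{S}^1\times\mathbb{S}^2$ summand and observes that $\partial H$ meets each $\{pt\}\times\mathbb{S}^2$ in a circle bounding a disk on either side, whereas you argue via the complement of $H$ in $\mathbb{S}^3$ --- the latter should really be carried out in $\partial X_1$ itself (which is a surgered $\mathbb{S}^3$, not $\mathbb{S}^3$), but this is a one-line fix and does not affect the argument.
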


\begin{figure}[!htb]
	\centering
	\includegraphics[width=0.6\linewidth]{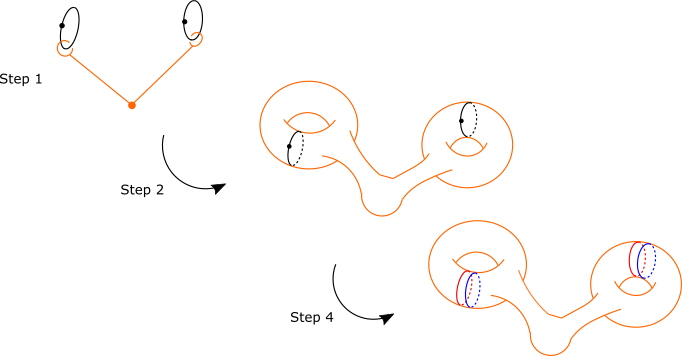}
	\caption{The process of constructing the Heegaard-surface for $2$ dotted circles.}
	\label{algorithm_surface}
\end{figure}

\begin{proof}
The first thing we want to show is that $\Sigma$ actually is a Heegaard surface for $X_1$. The argument can be reduced to a single dotted circle with a meridian $\mu$ around it by looking at just one of the connect summands in $\partial X_1\simeq \sharp_k \mathbb{S}^1\times \mathbb{S}^2$. Now thicken up the meridian and call the resulting genus $1$ handlebody $H$.
The boundary of the $0$-handle with a single attached $1$-handle is $\mathbb{S}^1\times \mathbb{S}^2$, with $\mu$ running along the $\mathbb{S}^1$-factor. The boundary of $H$, being nothing but $\partial (\mathbb{S}^1\times B^2)=\mathbb{S}^1\times \mathbb{S}^1$, separates every $\{pt.\}\times S^2$ into an inner and an outer disk, meaning that this embedded $\mathbb{S}^1\times \mathbb{S}^1$ bounds a solid torus on both sides. We have thus found a Heegaard surface for $\mathbb{S}^1\times \mathbb{S}^2$.\par
After flowing the attaching link $L$ of the $2$-handles onto $\Sigma_g$, with all the steps like introducing and resolving intersections and kinks if necessary, we get $(\Sigma,\alpha,\beta)$ together with $l$ simple closed curves $K_1,\dots,K_l$. The only thing we still have to worry about is finding the $\gamma$-curves. Recall that we already analyzed $H_\gamma=X_2\cap X_3 = C_L^\ast \cup\big(H_{12}\cup C_L^\ast\big)^\ast$ briefly in the proof of Theorem \ref{thm:existence}. The first $2$-handles we attach, the ones defining the compression body $C_L^\ast$, are precisely represented by the attaching link $L\subset \Sigma_g$, so $\gamma_1,\dots,\gamma_l$ are given by $K_1,\dots,K_l$. Now we do handleslides among the $\alpha$-curves so that every $K_i\in L$ intersects exactly one of the $\alpha$-curves. After possibly renaming, we have that $\lvert K_i\cap \alpha_j\rvert=\delta_{ij}$ for $1\leq i\leq l$ and $1\leq j\leq g$. To get to the missing $\gamma$-curves we now inspect $\big(H_{12}\cup C_L^\ast\big)^\ast$. To avoid getting lost in the following argument, consult Figures \ref{X_2_int_X_3} and \ref{fig:doubling}. We recall that $g^\prime=g-l$ and that $\partial X_2 = \sharp_{g^\prime} \mathbb{S}^1\times \mathbb{S}^2=\big(H_{12}\cup C_L^\ast\big)\cup_{\Sigma_{g^\prime}}\big(H_{12}\cup C_L^\ast\big)^\ast$ is a Heegaard splitting. The genus $g^\prime$ handlebody $\big(H_{12}\cup C_L^\ast\big)$ is the handlebody that remains after taking $H_{12}$ and filling in the $2$-handles defined by $L$. To get $\partial X_2$ we simply double this handlebody by attaching its dual to its boundary using the identity map. Diagrammatically this means that we have to attach $2$-handles to curves parallel to the remaining $\alpha$-curves $\alpha_{l+1},\dots,\alpha_g$. These are then the missing $\gamma$-curves.\par
Putting these $g-l$ new curves together with the original $l$ curves $K_1,\dots,K_l$ we now have a complete system of $g$ curves in $\Sigma_g$. Calling them $(\gamma_1,\dots,\gamma_g)=\gamma$, $(\Sigma,\alpha,\beta,\gamma)$ is a trisection diagram by construction. The reader is invited to apply Algorithm \ref{alg:tris_to_kirby} for a sanity check and see that we retrieve the original Kirby diagram plus some $0$-framed unknots which represent cancelling $2-3$ handle pairs.
\end{proof}

\begin{figure}[!htb]
	\centering
	\includegraphics[width=0.3\linewidth]{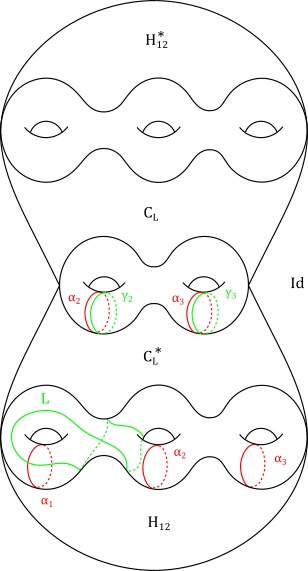}
	\caption{Finding the missing $\gamma$-curves by realizing $\partial X_2$ as the double of $H_{12}\cup C_L^\ast$}
	\label{fig:doubling}
\end{figure}

\begin{remark}
The number of stabilizations in Proposition \ref{prop:3d_diagram} can be reduced by observing that, after step $3$, every $K_i$ has a stabilization of its own, or rather $3$-dimensional $1$-handle that $K_i$ runs over but that no other $K_j$ runs over. The framing of $K_i$ can now be made to coincide with the surface framing not with the application of Reidemeister $1$-moves and subsequent stabilizations but by winding $K_i$ around the boundary of this $3$-dimensional $1$-handle.  
\end{remark}

Using Proposition \ref{prop:3d_diagram} we are in a position to prove Algorithm \ref{alg:main}. The trick of the proof is to regard the blackboard on which the Kirby diagram is defined as (part of) the Heegaard surface found in Proposition \ref{prop:3d_diagram}. This serves two functions: first it means that we do not have to choose any flows and second it means that the notions of blackboard framing and surface framing coincide. So far we only ever argued that the surface framing can be made to coincide with the framing of the attaching link but this was not enough to get an algorithm since an algorithm has to prescribe exactly how many kinks one has to introduce, not just that there is a number of kinks that makes the framings coincide. The following algorithm thus is an adaptation of Proposition \ref{prop:3d_diagram}.

\begin{proof}[Proof of Algorithm \ref{alg:main}]
If we can show that a Kirby diagram can be interpreted as a link laying on a Heegaard surface then we are done since all the steps we do afterwards just precisely mirror those of Proposition \ref{prop:3d_diagram}. It is not difficult to reinterpret Kirby diagrams in this way: Recall that any Kirby diagram is obtained by projecting the attaching regions of the $4$-dimensional $1$-handles and the framed attaching link of the $4$-dimensional $2$-handles living in $\mathbb{R}^3$ onto a suitable plane $P\subset \mathbb{R}^3$. Viewing $P$ as the boundary of a $3$-dimensional $0$-handle and the projections of the attaching regions of the $4$-dimensional $1$-handles as attaching regions of $3$-dimensional $1$-handles, we can construct a Heegaard splitting of $\natural_k \mathbb{S}^1\times\mathbb{S}^2$, where $k$ is the number of $4$-dimensional $1$-handles, by letting the $3$-dimensional $1$-handles run over the $4$-dimensional $1$-handles. Thus the Kirby diagram really is a diagram on $\mathbb{R}^2$ with the blackboard framing coinciding with the surface framing of a Heegaard-surface. This process is illustrated in Figure \ref{fig:projection}.\par 

\begin{figure}[!htb]
	\centering
	\includegraphics[width=0.5\linewidth]{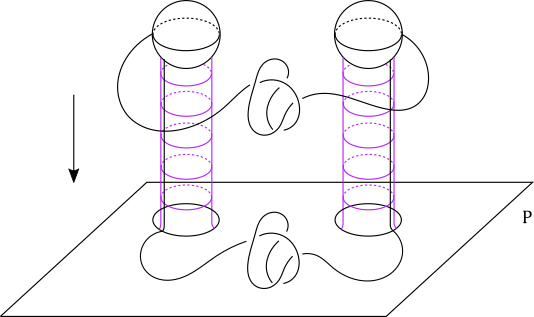}
	\caption{An illustration of the projection that allows one to interpret the Kirby diagram as a link almost laying on a Heegaard surface.}
	\label{fig:projection}
\end{figure}

The only reason why step $2$ introduces $2$ self intersections is to ensure that the framing of the knot in question does not change which it would if we only introduced one self intersection.\par
The process up to step $5$ ensures that every knot $K$ in the $2$-handle attaching link has an $\alpha$-curve that it intersects once and transversely and that no other knot intersects. It might still be the case that $K$ intersects more than just one $\alpha$-curve which we want to avoid, necessitating Step 5:\par
Recall that, because of step $2$, every $K_j$ has an overcrossing. In particular every $K_1$ therefore has some $\alpha$-curve that it intersects only once, if there are several pick one and call it $\alpha_1$. Every other $\alpha$-curve that $K_1$ intersects can now be slid over $\alpha_1$ along the segment of $K_1$ that connects them. We have thus achieved that $\lvert \alpha_i\cap K_1\rvert=\delta_{i1}$. Repeat this process for all $j$ and note that doing this process for other knots does not destroy this intersection property for the knots that came before it.\par
The result follows by the same reasoning as in proposition \ref{prop:3d_diagram}.
\end{proof}
For a simple example of this algorithm in action see Figure \ref{application_algorithm}.

\begin{figure}[!htb]
	\centering
	\includegraphics[width=0.6\linewidth]{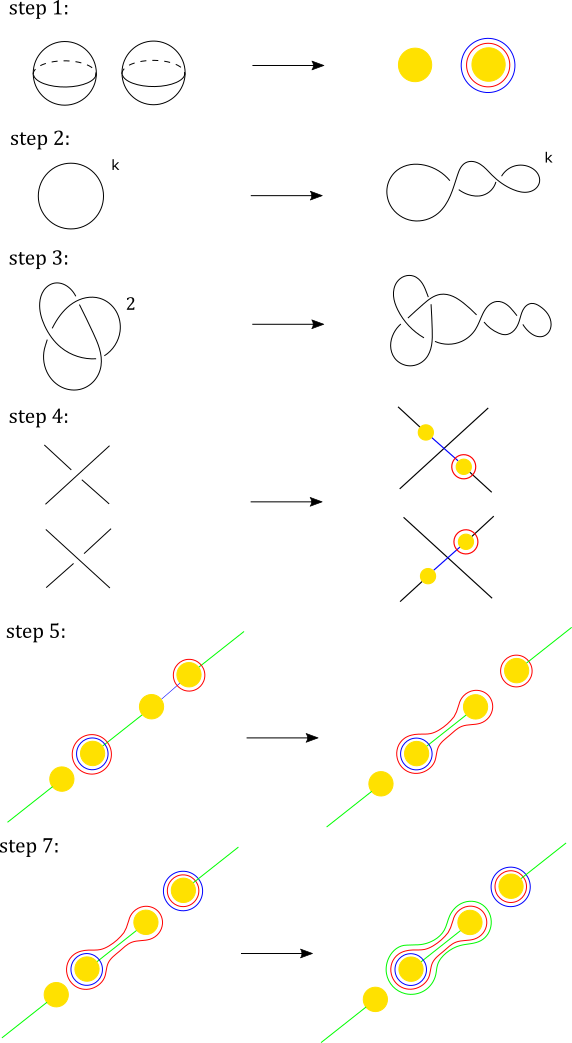}
	\caption{Illustrating some of the steps described in Algorithm \ref{alg:main}}
	\label{intro_steps}
\end{figure}

\begin{figure}[!htb]
	\centering
	\includegraphics[width=0.7\linewidth]{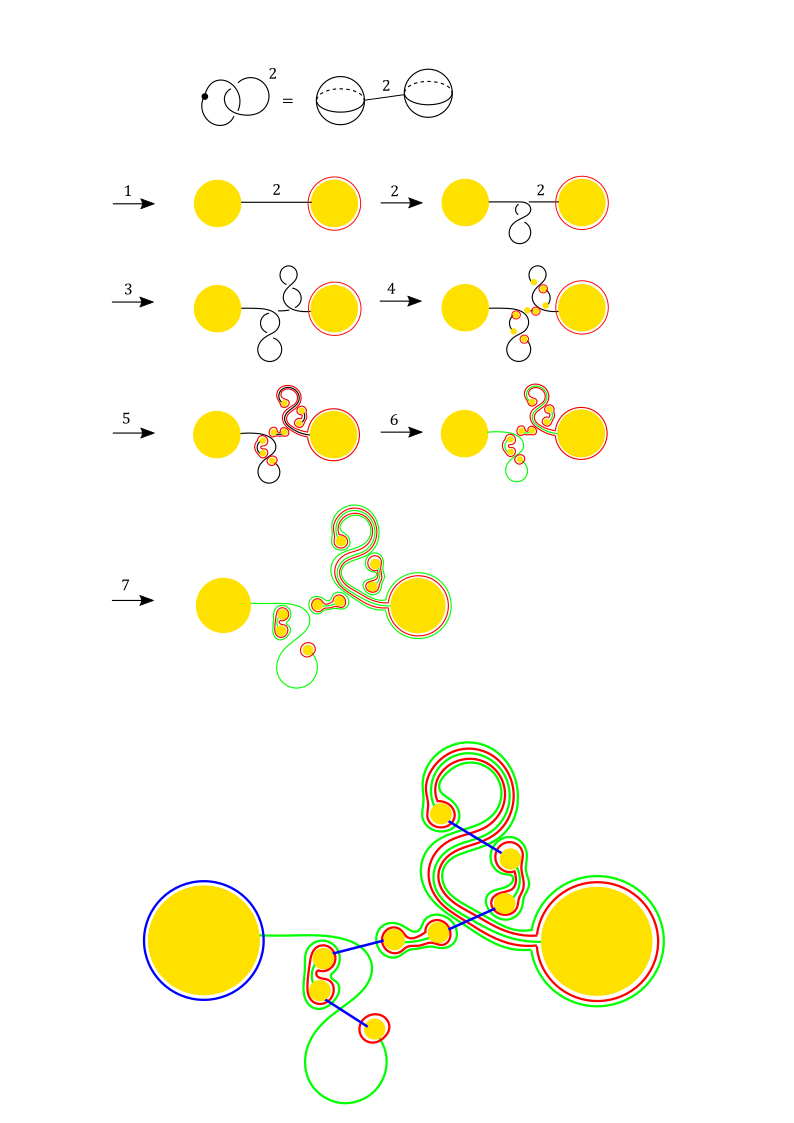}
	\caption{Applying the algorithm to a simple example. The blue curves were suppressed in the steps (since they are all standard anyways and do not feature in the algorithm) and were only added in the end.}
	\label{application_algorithm}
\end{figure}

\begin{remark}
Again, the number of stabilizations in the algorithm can be reduced by observing that, after step $4$ in Algorithm \ref{alg:main}, every $K_i$ has an overcrossing of its own. This means that for every $K_i$ there exists a $3$-dimensional $1$ handle that $K_i$ runs over but that no $K_j$ for $i\neq j$ runs over. The framing of $K_i$ can now be made to coincide with the surface framing not with the introduction of kinks and subsequent stabilizations but by assigning an integer to the handle that is crossed only by $K_i$ which specifies the number of times $K_i$ winds around this handle.
\end{remark}

Applying this slightly streamlined algorithm to the same example as the one portrayed in Figure \ref{application_algorithm} yields the diagram shown in Figure \ref{application_algorithm_simplified}.

\begin{figure}[!htb]
	\centering
	\includegraphics[width=0.3\linewidth]{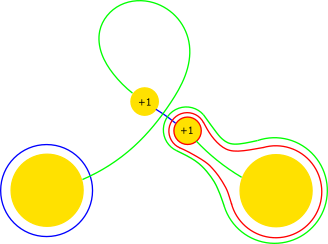}
	\caption{The result of applying the simplified algorithm to the same Kirby diagram as in Figure \ref{application_algorithm}.}
	\label{application_algorithm_simplified}
\end{figure}

\section{The Non-Orientable Case}
\label{sec:nonorientable}
Trisections of non-orientable $4$-manifolds were introduced by Maggy Miller and Patrick Naylor \cite{Miller2020}. We will repeat all relevant definitions here. \par
The natural notion of a non-orientable genus $k$ handlebody is a $0$-handle with $k$ orientation reversing $1$-handles attached, we will denote this object by $\natural_k\mathbb{S}^1\Tilde{\times}\,B^3$. It should be remarked that $\natural_k\mathbb{S}^1\Tilde{\times}\,B^3\cong\mathbb{S}^1\Tilde{\times}\,B^3\natural_{k-1}\mathbb{S}^1\times B^3$ since any of the orientation preserving $1$-handles can be converted to orientation reversing $1$-handles by sliding them over an orientation reversing $1$-handle. Further note that $\partial(\natural_k\mathbb{S}^1\Tilde{\times}\,B^3)=\sharp_k \mathbb{S}^1\Tilde{\times}\,\mathbb{S}^2$, so the $k$-fold connected sum of the non-orientable $\mathbb{S}^2$-bundle over $\mathbb{S}^1$.\par
Now given a handle decomposition of a closed non-orientable $4$-manifold $X$ with a single $0$-handle and a single $4$-handle we can again ask the question whether specifying the way in which the $1$- and $2$-handles are attached is enough to determine $X$ up to diffeomorphism. A non-orientable version of the Laudenbach-Poenaru theorem (proven by Maggie Miller and Patrick Naylor \cite{Miller2020}) answers this in the affirmative.\par
\begin{figure}[!htb]
	\centering
	\includegraphics[width=0.3\linewidth]{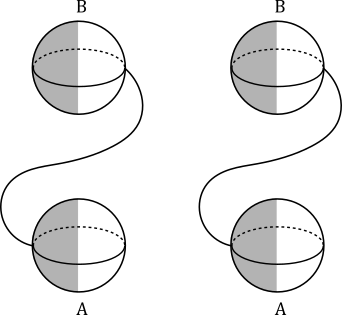}
	\caption{An example of a Kirby diagram with non-orientable $1$-handles. This Kirby diagram describes $\mathbb{S}^1\Tilde{\times}\,\mathbb{S}^3$, albeit not in the most simple way which would just be a single non-orientable $1$-handle with no $2$-handles. The framing of the $2$-handle was not specified because any framing would lead to a description of the same manifold.}
	\label{fig:nonorientableKirby}
\end{figure}
As for the attachment of $4$-dimensional $2$-handles it should be remarked that, since their attaching regions are diffeomorphic to $\mathbb{S}^1\times B^2$, the cores of $2$-handles cannot run over orientation reversing $1$-handles an odd number of times. There is a small issue with framing: the blackboard framing is well defined only if every segment of the attaching link that connects to an orientation reversing $1$-handle is framed separately, see \cite{Miller2020} for details. 
With all of these provisions made we see that Kirby diagrams generalize in a straighforward fashion to the non-orientable case. We now turn to trisections and trisection diagrams of closed non-orientable $4$-manifolds.

\begin{manualdefinition}{\ref{def:trisections}'}\label{def:trisectionsprime}
Let $X$ be a closed non-orientable $4$-manifold. A $(g;\,k_1,k_2,k_3)$-trisection of $X$ is a decomposition 
\begin{align*}
X=X_1\cup X_2\cup X_3
\end{align*} 
where $X_i\cong \natural_{k_i}\mathbb{S}^1\Tilde{\times}\,B^3$, $H_{i,i+1}= X_i\cap X_{i+1}\cong \natural_g\mathbb{S}^1\Tilde{\times}\,B^2$, and $\Sigma = X_1\cap X_2\cap X_3\cong\sharp_g\mathbb{S}^1\Tilde{\times}\,\mathbb{S}^1$. We call the tuple $(X,X_1,X_2,X_3)$ a trisected manifold.
\end{manualdefinition}
We will define the genus of the non-orientable surface $\sharp_g\mathbb{S}^1\Tilde{\times}\,\mathbb{S}^1=:\Tilde{\Sigma}_g$ to be $g$ so that the non-orientable case can be discussed in perfect analogy to the orientable one.\par
The notion of slide-diffeomorphism equivalence does not change at all, i.e. $(\Tilde{\Sigma}_g,\alpha,\beta)$ is slide diffeomorphic to $(\Tilde{\Sigma}^\prime_g,\alpha^\prime,\beta^\prime)$ if there exists a diffeomorphism carrying $\Tilde{\Sigma}_g$ to $\Tilde{\Sigma}^\prime_g$ so that the systems of curves $\alpha$ and $\beta$ are slide equivalent to the systems of curves $\alpha^\prime$ and $\beta^\prime$, respectively.

\begin{manualdefinition}{\ref{def:trisectiondiagram}'}\label{def:trisectiondiagramssprime}
A $(g;\,k_1,\,k_2,\,k_3)$-trisection diagram $(\Tilde{\Sigma}_g,\alpha,\beta,\gamma)$ is a genus $g$ surfaces $\Tilde{\Sigma}_g$ together with three sets of $g$ curves $\alpha=(\alpha_1,\dots,\alpha_g)$, $\beta=(\beta_1,\dots,\beta_g)$, and $\gamma=(\gamma_1,\dots,\gamma_g)$ satisfying that 
\begin{itemize}
	\item $(\Tilde{\Sigma}_g,\,\alpha,\,\beta)$ is slide diffeomorphic to the $g-k_1$ times stabilized standard Heegaard diagram\\ of $\sharp_{k_1}\mathbb{S}^1\times\mathbb{S}^2$,
	see Figure \ref{fig:nonorientable_tris_diag},
	\item $(\Tilde{\Sigma}_g,\,\beta,\,\gamma)$ is slide diffeomorphic to the $g-k_2$ times stabilized standard Heegaard diagram\\ of $\sharp_{k_2}\mathbb{S}^1\times\mathbb{S}^2$,
	\item 
	$(\Tilde{\Sigma}_g,\,\gamma,\,\alpha)$ is slide diffeomorphic to the $g-k_3$ times stabilized standard Heegaard diagram\\ of $\sharp_{k_3}\mathbb{S}^1\times\mathbb{S}^2$. 
\end{itemize} 

We say that a trisection diagram is in $X_1$-standard form if $(\Tilde{\Sigma}_g,\alpha,\beta)$ looks like Figure \ref{fig:nonorientable_tris_diag}, meaning that $(\Tilde{\Sigma}_g,\alpha,\beta,\gamma)$ are as in Figure \ref{fig:trisection_diagram_X_1}. In particular we want to consider $\Tilde{\Sigma}_g$ as $\mathbb{S}^1\Tilde{\times}\,\mathbb{S}^1\natural_{g-1}\mathbb{S}^1\times\mathbb{S}^1$.
$X_2$- and $X_3$-standard form are defined similarly only with $(k_2,\,\beta,\,\gamma)$ and $(k_3,\,\gamma,\,\alpha)$ instead of $(k_1,\,\alpha,\,\beta)$, respectively.
\end{manualdefinition}
It is clear that a non-orientable trisection induces a non-orientable trisection diagram exactly as discussed for orientable trisections. Going in the other direction also works if one uses a non-orientable version of Waldhausens theorem proven in \cite{Miller2020} that posits uniqueness of genus $g$ Heegaard splittings of $\sharp_k \mathbb{S}^1\Tilde{\times}\,\mathbb{S}^2$ up to isotopy and handleslides. 

Clearly it makes sense to ask for an algorithm that relates Kirby- and trisection diagrams in the non-orientable case. Indeed everything works out in precisely the way one would expect. \par
The proof of existence of trisections for closed non-orientable $4$-manifolds can be followed verbatim. One might worry about making sense of the surface framing but since every knot in the attaching link $L$ has to run over orientation reversing $1$-handles an even number of times and since we frame every arc connecting orientation reversing $1$-handles separately this works out fine.\par
The remaining Algorithms and Propositions also have obvious counterparts:

\begin{manualalgorithm}{\ref{alg:tris_to_kirby}'}\label{alg:tris_to_kirby_prime}
Let $(\Tilde{\Sigma}_g,\alpha,\beta,\gamma)$ be a trisection diagram of a closed non-orientable $4$-manifold $X$ in $X_1$-standard form. Then one obtains a Kirby diagram describing $X$ by applying the following algorithm
\begin{enumerate}
    \item replace every parallel $\alpha-\beta$ pair with a $1$-handle, the leftmost one of which is orientation reversing.
    \item delete all other $\alpha-\beta$ pairs.
    \item regard all $\gamma$-curves as a framed link, where the framing is induced by the surface $\Tilde{\Sigma}_g$ (convert this to blackboard framing if needed).
    \item delete the surface $\Tilde{\Sigma}_g$.
\end{enumerate}
\end{manualalgorithm}

\begin{figure}[!htb]
	\centering
	\includegraphics[width=0.8\linewidth]{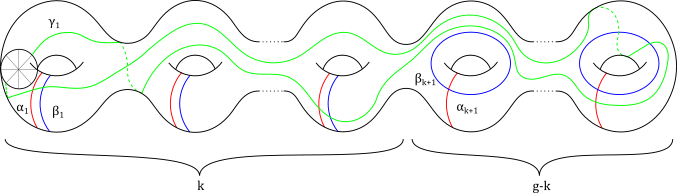}
	\caption{A non-orientable trisection diagram in $X_1$-standard position. We think of the surface as the connected sum of a single Klein bottle and $g-1$ $2$-tori. The wheel-like symbol denotes antipodal identification.}
	\label{fig:nonorientable_tris_diag}
\end{figure}

Using Algorithm \ref{alg:tris_to_kirby_prime} we can again introduce a ''guessing method'' to find trisection diagrams of closed non-orientable $4$-manifolds. In particular we get 

\begin{corollary}
There is no closed non-orientable $4$-manifold that has genus $0$. The only closed oriented $4$-manifolds that has genus $1$ is $\mathbb{S}^1\Tilde{\times}\,\mathbb{S}^3$.
\end{corollary}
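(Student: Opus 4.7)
The plan is to mirror the genus-$0,1$ classification sketched in the remark after Algorithm \ref{alg:tris_to_kirby} for the orientable case, but now using Algorithm \ref{alg:tris_to_kirby_prime} in the non-orientable setting and running a small case analysis on what $X_1$-standard trisection diagrams can look like at genus $0$ and genus $1$.

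For genus $0$, the only conceivable diagram has $\tilde{\Sigma}_0 = \mathbb{S}^2$, $k_1 = k_2 = k_3 = 0$, and empty $\alpha, \beta, \gamma$ systems. Algorithm \ref{alg:tris_to_kirby_prime} returns the empty Kirby diagram, which describes $\mathbb{S}^4$. Since $\mathbb{S}^4$ is orientable, no closed non-orientable $4$-manifold can admit a genus $0$ trisection.

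For genus $1$, the central surface must be the Klein bottle $\tilde{\Sigma}_1 = \mathbb{S}^1 \Tilde{\times}\, \mathbb{S}^1$. In $X_1$-standard form the unique $\alpha$- and $\beta$-curves form the parallel pair depicted in Figure \ref{fig:nonorientable_tris_diag}, and step $1$ of Algorithm \ref{alg:tris_to_kirby_prime} converts this pair into a single orientation-reversing $1$-handle. Thus the Kirby diagram we read off consists of one orientation-reversing $1$-handle together with a single framed curve coming from $\gamma$. The constraint that $(\beta,\gamma)$ and $(\gamma,\alpha)$ each be slide-diffeomorphic to a standard genus-$1$ Heegaard diagram of $\sharp_{k_i}\mathbb{S}^1 \Tilde{\times}\, \mathbb{S}^2$ restricts $\gamma$ to only a handful of isotopy classes on the Klein bottle; up to slide diffeomorphism each admissible choice lets us arrange that Algorithm \ref{alg:tris_to_kirby_prime} outputs either the empty framed link or a $0$-framed unknot realizing a cancelling $2$-$3$ pair, as noted in the proof of Proposition \ref{prop:3d_diagram}. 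In every case the resulting Kirby diagram simplifies to a single orientation-reversing $1$-handle with no essential $2$-handles, which is precisely the Kirby diagram of $\mathbb{S}^1 \Tilde{\times}\, \mathbb{S}^3$.

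The hard part of this argument is the enumeration of admissible $\gamma$-curves on $\tilde{\Sigma}_1$: one needs to check that any simple closed curve not lying in one of these few slide-diffeomorphism classes pairs with $\alpha$ to give a non-Heegaard diagram, or else a Heegaard diagram of a $3$-manifold other than $\mathbb{S}^3$ or $\mathbb{S}^1 \Tilde{\times}\, \mathbb{S}^2$. This is done by combining the non-orientable Waldhausen-type uniqueness theorem from \cite{Miller2020} with a direct classification of essential simple closed curves on the Klein bottle, after which the corollary follows by reading off the Kirby diagram produced by Algorithm \ref{alg:tris_to_kirby_prime} in each case.
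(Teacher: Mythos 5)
Your outline follows the same strategy as the paper---put the diagram in $X_1$-standard form and classify the possible $\gamma$-curves on the central surface---but the decisive step of the genus-$1$ case is only announced, not carried out. The claims that the admissible $\gamma$'s form ``only a handful of isotopy classes'' and that ``each admissible choice'' leads to a cancelling $2$--$3$ pair are precisely what has to be proved, and as written they are unsupported. The paper closes exactly this gap with a single citation: by Lickorish \cite{Lickorish1963} the Klein bottle carries only four isotopy classes of orientation-preserving simple closed curves, and exactly one of these is both non-trivial and non-separating, namely the meridian. Since the single $\gamma$-curve must bound a compression disk in the genus-one handlebody $H_{23}$, it must be a two-sided, non-separating curve on $\Tilde{\Sigma}_1$, so it is forced into that one isotopy class; the diagram is then the one in Figure \ref{fig:nonorientable_sphere_bundle}, which describes $\mathbb{S}^1\Tilde{\times}\,\mathbb{S}^3$. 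Note that this also makes your proposed detour through the non-orientable Waldhausen theorem of \cite{Miller2020} and through Kirby-calculus simplification of the output of Algorithm \ref{alg:tris_to_kirby_prime} unnecessary: one does not need to rule out ``bad'' $\gamma$-curves by checking which pairs fail to be Heegaard diagrams, because the handlebody condition alone pins down the curve. Your genus-$0$ argument is correct, though the paper's is more direct: genus $0$ forces $k_1=k_2=k_3=0$, so there are no $1$-handles and the manifold is automatically orientable, with no need to identify it as $\mathbb{S}^4$.
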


\begin{proof}
Genus $0$ implies that the associated manifolds cannot have any $1$-handles and are therefore orientable.\par
To prove the classification for genus $1$ we use the fact that there are only $4$ isotopy classes of orientation preserving simple closed curves on the Klein bottle \cite{Lickorish1963}. Of these only one is both non-trivial and non-seperating: the meridian. We thus end up with a trisection diagram as in Figure \ref{fig:nonorientable_sphere_bundle} which describes $\mathbb{S}^1\Tilde{\times}\,\mathbb{S}^3$.
\end{proof}

\begin{figure}[!htb]
	\centering
	\includegraphics[width=0.5\linewidth]{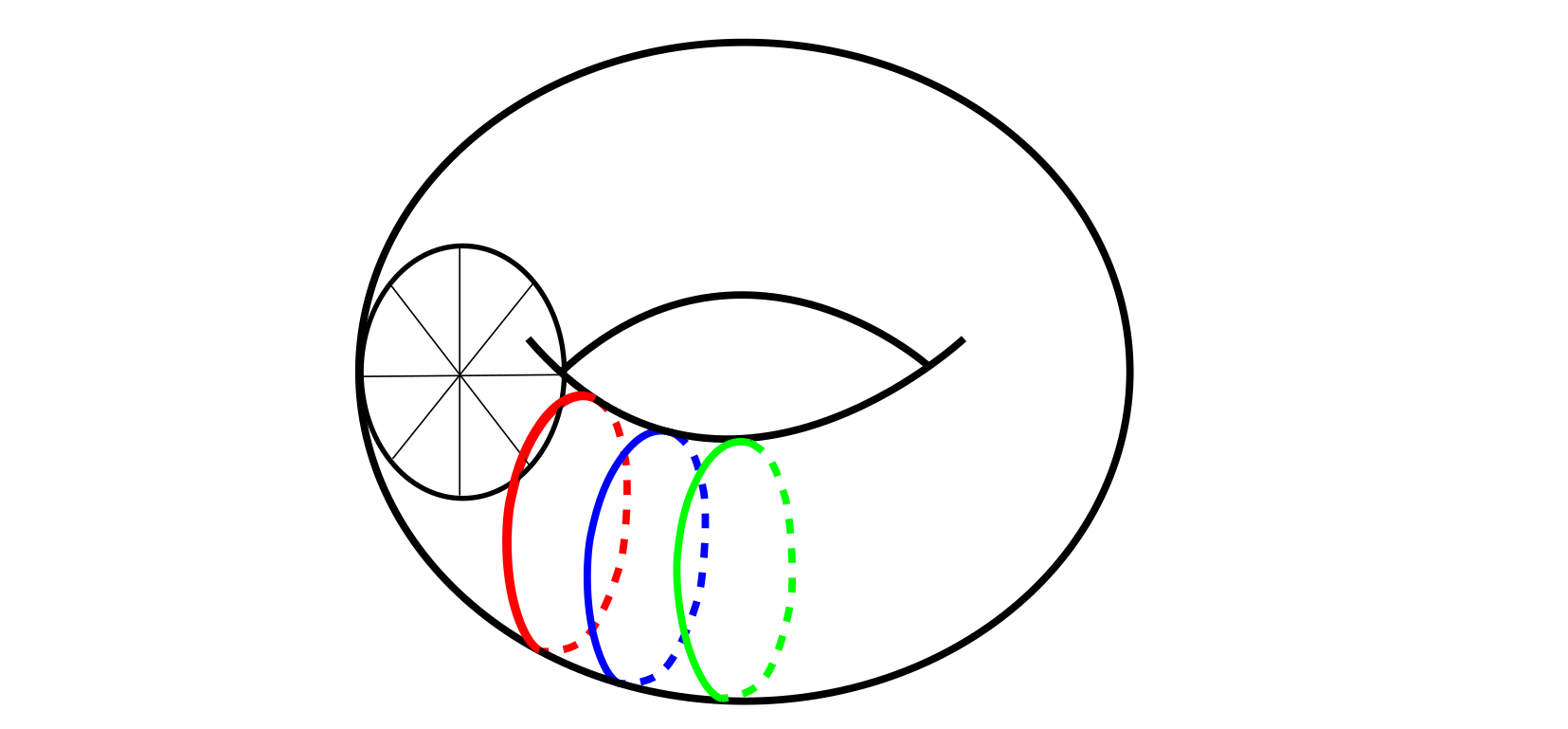}
	\caption{The genus $1$ trisection diagram of $\mathbb{S}^1\Tilde{\times}\,\mathbb{S}^3$}
	\label{fig:nonorientable_sphere_bundle}
\end{figure}

\begin{remark}
$\Tilde{\Sigma}_g$ for $g>1$ has, in stark contrast to the Klein bottle, infinitely many isotopy classes of nontrivial orientation preserving simple closed curves, making classification efforts for closed non-orientable $4$-manifolds of higher genus much more difficult. To see this we observe that there are infinitely many closed non-orientable $3$-manifolds of genus $2$ (for example the infinite family $(\mathbb{S}^1\Tilde{\times}\,\mathbb{S}^2)\sharp L(p;1)$, all of which are non-diffeomorphic to one another as they have different $\pi_1$) so cannot be only finitely many isotopy classes of nontrivial simple closed curves with orientable tubular neighbourhoods on $\Tilde{\Sigma}_g$ for $g>1$.
\end{remark}

Proposition \ref{prop:3d_diagram} can be straightforwardly adapted (except that we do not use the dotted circle notation), and, realizing that we can explicitly construct Heegaard splittings for $\sharp_g \mathbb{S}^1\Tilde{\times}\,\,\mathbb{S}^2$ like in the orientable case, we end up with 

\begin{manualalgorithm}{\ref{alg:main}'}\label{alg:main_prime}

Given a Kirby diagram describing a closed non-orientable $4$-manifold $X$, which comprises $k$ $1$-handles, one of which is orientation reversing, and a framed link $L$ consisting of $l$ knots $K_1,\dots,K_l$ one obtains a trisection diagram describing $X$ by applying the following algorithm (many of whose steps are illustrated in Figure \ref{intro_steps}):
\begin{enumerate}
    \item Replace the projection of the attaching regions of the $4$-dimensional $1$-handles by attaching regions of $3$-dimensional $1$-handles (a pair of $2$ disks) with a parallel $\alpha-\beta$ curve pair drawn around one of the disks. The attaching region of the orientation reversing $4$-dimensional $1$-handle should project to the attaching region of an orientation reversing $3$-dimensional $1$-handle.
    \item For every knot $K$ in the link $L$ that does not have an overcrossing, introduce a $+1$- then a $-1$-kink. In other words we do a Reidemeister-$2$ move between two strands of the same knot $K$.
    \item Introduce $\pm 1$ kinks matching the sign and numerical value of the blackboard framing.
    \item Replace intersections (including the ones introduced in step $1$) in the fashion shown in Figure \ref{intro_steps}.
    \item Do handle slides among the $\alpha$-curves such that $\lvert \alpha_i \cap K_j \rvert = \delta_{ij}$ for $i=1,\dots,g$ and $j=1,\dots,l$, where the $K_j$ are the knots constituting the link $L$ in the Kirby diagram, and $g$ is the genus of the surface $\Sigma$.
    \item Color $L$ green.
    \item Draw a green curve parallel to every unpaired $\alpha$-curve, i.e. all the $\alpha$-curves that do not intersect any of the $K_j$ (so precisely $\alpha_{l+1},\dots,\alpha_g$), and call these new green curves $\gamma_{l+1},\dots,\gamma_g$.
\end{enumerate}
\end{manualalgorithm}
Algorithm \ref{alg:main_prime} differs from Algorithm \ref{alg:main} only in step $1$.

\pagebreak
\bibliographystyle{alpha}
\bibliography{references}
\end{document}